\documentclass[11pt,reqno]{amsart}

\usepackage[utf8]{inputenc}
\usepackage[T1]{fontenc}
\usepackage[french,english]{babel}
\usepackage{lmodern,mathtools,hyperref}
\usepackage[a4paper,margin=2.5cm]{geometry}

\numberwithin{equation}{section}

\author{Michel Bonnefont}
\address[MB]{%
  Institut de Mathématiques de Bordeaux, Université de Bordeaux, France.}

\author{Djalil Chafaï}
\address[DC]{CEREMADE, Université Paris-Dauphine, PSL, IUF, France.}

\author{Ronan Herry}
\address[RH]{Université du Luxembourg et Université Paris-Est Marne-la-Vallée.}

\title[On logarithmic Sobolev inequalities on the Heisenberg group]% 
{On logarithmic Sobolev inequalities\\%
for the heat kernel on the Heisenberg group}

\date{July 2016, revised September 2017, revised March 2018, to appear in \emph{\href{Annales de la Faculté des sciences de Toulouse : Mathématiques, Série 6, Tome 29 (2020) no. 2, pp. 335-355. doi : 10.5802/afst.1633. https://afst.centre-mersenne.org/item/AFST_2020_6_29_2_335_0/}{Annales de la Faculté des sciences de Toulouse : Mathématiques, Série 6, Tome 29 (2020) no. 2, pp. 335-355}}, compiled \today}

\keywords{Heisenberg group; Heat kernel; Brownian Motion; Poincaré inequality; Logarithmic Sobolev inequality; Random Walk; Central Limit Theorem}
\subjclass[2010]{22E30; 35R03; 35A23; 60J65}

\newtheorem{theorem}{Theorem}[section]
\newtheorem{lemma}[theorem]{Lemma}
\newtheorem{corollary}[theorem]{Corollary}

\DeclareMathOperator{\Dil}{dil}

\DeclareMathOperator{\Var}{\mathbf{Var}}
\DeclareMathOperator{\Ent}{\mathbf{Ent}}
\DeclareMathOperator{\Esp}{\mathbf{E}}

\DeclareMathOperator{\diag}{diag}

\DeclareMathOperator{\e}{e}

\newcommand{\dd}{\,\mathrm{d}}
\newcommand{\veps}{\varepsilon}

\newcommand{\cC}{\mathcal{C}}

\newcommand{\cN}{\mathcal{N}}

\newcommand{\bA}{\mathbf{A}}\newcommand{\bB}{\mathbf{B}}

\newcommand{\bH}{\mathbf{H}}

\newcommand{\bW}{\mathbf{W}}\newcommand{\bX}{\mathbf{X}}
\newcommand{\bY}{\mathbf{Y}}\newcommand{\bZ}{\mathbf{Z}}

\newcommand{\fH}{\mathfrak{H}}

\newcommand{\dE}{\mathbb{E}}
\newcommand{\dH}{\mathbb{H}}

\newcommand{\dR}{\mathbb{R}}

\begin{document}

\begin{abstract}
  In this note, we derive a new logarithmic Sobolev inequality for the heat
  kernel on the Heisenberg group. The proof is inspired from the historical
  method of Leonard Gross with the Central Limit Theorem for a random walk.
  Here the non commutative nature of the increments produces a new gradient
  which naturally involves a Brownian bridge on the Heisenberg group. This new
  inequality contains the optimal logarithmic Sobolev inequality for the
  Gaussian distribution in two dimensions. We compare this new inequality with
  the sub-elliptic logarithmic Sobolev inequality of Hong-Quan Li and with the
  more recent inequality of Fabrice Baudoin and Nicola Garofalo obtained using
  a generalized curvature criterion. Finally, we extend this inequality to the
  case of homogeneous Carnot groups of rank two.

  \textsc{Résumé.}\selectlanguage{french} %
  Dans cette note, nous obtenons une inégalité de Sobolev logarithmique
  nouvelle pour le noyau de la chaleur sur le groupe de Heisenberg. La preuve
  est inspirée de la méthode historique de Leonard Gross à base de théorème
  limite central pour une marche aléatoire. Ici la nature non commutative des
  incréments produit un nouveau gradient qui fait intervenir naturellement un
  pont brownien sur le groupe de Heisenberg. Cette nouvelle inégalité contient
  l'inégalité de Sobolev logarithmique optimale pour la mesure gaussienne en
  deux dimensions. Nous comparons cette nouvelle inégalité avec l'inégalité
  sous-elliptique de Hong-Quan Li et avec les inégalités plus récentes de
  Fabrice Beaudoin et Nicola Garofalo obtenues avec un critère de courbure
  généralisé. Enfin nous étendons notre inégalités au cas des groupes de
  Carnot homogène de rang deux. %
  \selectlanguage{english}
 
\end{abstract}

\maketitle

{\footnotesize\tableofcontents}

\section{The Heisenberg group and our main result}
\label{se:intro}

In this note, we derive a new logarithmic Sobolev inequality for the heat
kernel on the Heisenberg group (Theorem \ref{th:}). Our proof is inspired from
the historical method of Leonard Gross based on a random walk and a Central
Limit Theorem. Due to the non commutative nature of the group structure, the
energy which appears in the right hand side involves an integral over some
Brownian bridges on the Heisenberg group. To compare with other logarithmic
Sobolev inequalities, we study Brownian bridges on the Heisenberg group and
deduce a weighted logarithmic Sobolev inequality (Corollary \ref{co:}). This
weighted inequality is close to the symmetrized version of the sub-elliptic
logarithmic Sobolev inequality of Hong-Quan Li. We also compare with
inequalities due to Fabrice Baudoin and Nicola Garofalo, and provide a short
semigroup proof of these inequalities in the case of the Heisenberg group.

We choose to focus on the one dimensional Heisenberg group, for simplicity;
and also because very precise estimates and results are known in this
particular case, which helps to compare our new inequality with existing ones.
Nevertheless our new logarithmic Sobolev inequality remains more generally
valid for homogeneous Carnot groups of rank two (Theorem \ref{th:carnot}).

\subsection*{The model} Let us briefly introduce the model and its main
properties. The Heisenberg group $\dH$ is a remarkable simple mathematical
object, with rich algebraic, geometric, probabilistic, and analytic aspects.
Available in many versions (discrete or continuous; periodic or not), our work
focuses on the continuous Heisenberg group $\dH$, formed by the set of
$3\times3$ matrices
\[
M(a,b,c)
=
\begin{pmatrix}
  1 & a & c \\
  0 & 1 & b \\
  0 & 0 & 1 \\
\end{pmatrix}, \quad a,b,c\in\dR.
\]

The Heisenberg group $\dH$ is a non commutative sub-group of the general
linear group, with group operations
$M(a,b,c)M(a',b',c')=M(a+a',b+b',c+c'+ab')$ and $M(a,b,c)^{-1}=(-a,-b,-c+ab)$.
The neutral element $M(0,0,0)$ is called the origin. The Heisenberg group
$\dH$ is a Lie group i.e.\ a manifold compatible with group structure.

\subsubsection*{The Heisenberg algebra is stratified}
The Lie algebra $\fH$ i.e.\ the tangent space at the origin of $\dH$ is the
sub-algebra of $\mathcal{M}_3(\dR)$ given by the $3\times 3$ matrices of the
form
\[
\begin{pmatrix}
  0 & x & z \\
  0 & 0 & y \\
  0 & 0 & 0 \\
\end{pmatrix}, \quad x,y,z\in\dR.
\]
The canonical basis of $\dH$
\[
X:=
\begin{pmatrix}
  0 & 1 & 0 \\
  0 & 0 & 0 \\
  0 & 0 & 0 \\
\end{pmatrix},
\quad
Y:=
\begin{pmatrix}
  0 & 0 & 0 \\
  0 & 0 & 1 \\
  0 & 0 & 0 \\
\end{pmatrix},
\quad\text{and}\quad
Z:=
\begin{pmatrix}
  0 & 0 & 1 \\
  0 & 0 & 0 \\
  0 & 0 & 0 \\
\end{pmatrix}.
\]
satisfies an abstract version of the Dirac (or annihilation-creation)
commutation relation
\[
[X,Y]:=XY-YX=Z\quad\text{and}\quad [X,Z]=[Y,Z]=0.
\]
This relation shows that the Lie algebra $\fH$ is stratified
\[
  \fH=\fH_{0} \oplus \fH_{1},
\]
where $\fH_{0} = \mathrm{span}(X,Y)$ and $\fH_{1} = \mathrm{span}(Z)$ is the
center of $\fH_{0}$. This makes the Baker-Campbell-Hausdorff formula on $\fH$
particularly simple:
\[
\exp(A)\exp(B)=\exp\left(A+B+\frac{1}{2}[A,B]\right),\quad A,B\in\fH.
\]
\subsubsection*{Exponential coordinates}
Lie groups such as $\dH$ with stratified Lie algebra (that is Carnot groups)
have a diffeomorphic exponential map $\exp:A\in\fH\mapsto\exp(A)\in\dH$. This
identification of $\dH$ with $\fH$, namely
\[
\begin{pmatrix}
  1 & a & c \\
  0 & 1 & b \\
  0 & 0 & 1 \\
\end{pmatrix}
\equiv
\exp
\begin{pmatrix}
  0 & x & z \\
  0 & 0 & y \\
  0 & 0 & 0 \\
\end{pmatrix}
=\exp(xX+yY+zZ),
\]
allows to identify $\dH$ with $\dR^3$ equipped with the group structure
\[
(x,y,z)\cdot(x',y',z')=(x+x',y+y',z+z'+\frac{1}{2}(xy'-yx'))
\]
and $(x,y,z)^{-1}=(-x,-y,-z)$. The identity element is the ``origin''
$e:=(0,0,0)$. From now on, we use these ``exponential coordinates''.
Geometrically, the quantity $\frac{1}{2}(xy'-yx')$ is the algebraic area in
$\dR^2$ between a piecewise linear path and its chord namely the area
between \[[(0,0),(x,y)]\cup[(x,y),(x+x',y+y')] \quad \text{and} \quad
[(0,0),(x+x',y+y')].\] This area is zero if $(x,y)$ and $(x',y')$ are
collinear. The group product
\[
(x,y,0)(x',y',0)=(x+x',y+y',\frac{1}{2}(xy'-yx'))
\]
in $\dH$ encodes the sum of increments in $\dR^2$ and computes automatically
the generated area.

\subsubsection*{Vector fields on $\dH$}
Elements of $\fH$ can classically be extended to left-invariant vector fields.
This identification will always be made implicitly and the same notation for
element of $\fH$ and vector field is used. This gives for the canonical basis
at a point $(x, y, z)$
\begin{equation}\label{eq:XYZ}
  X:=\partial_x-\frac{y}{2}\partial_z,\quad
  Y:=\partial_y+\frac{x}{2}\partial_z,\quad
  Z:=\partial_z.
\end{equation}

\subsubsection*{Metric structure of $\dH$} On the Heisenberg group, a natural
distance associated to the left-invariant diffusion operator $ L= \frac{1}{2}
(X^2+Y^2 + \beta^2 Z^2)$, $\beta \geq 0$, is defined for all $h,g\in\dH$ by
\[
d(h,g):=\sup_{f}(f(h)-f(g))
\]
where the supremum runs over all $f\in\mathcal{C}^\infty(\dH,\dR)$ such that
\[
\Gamma(f):=(Xf)^2+(Yf)^2+ \beta^2 (Zf)^2 \leq1.
\]
In the case $\beta >0$, this distance corresponds to the Riemannian distance
obtained by asserting that $(X,Y, \beta Z)$ is an orthonormal basis of the
tangent space in each point. In the case $\beta=0$, it is known, see for
instance \cite[Prop.~3.1]{MR922334}, that it coincides with the
Car\-not\,--\,Cara\-théo\-do\-ry sub-Riemannian distance obtained by taking
the length of the shortest horizontal curve. Recall that a curve is
\emph{horizontal} if its speed vector belongs almost everywhere to the
\emph{horizontal space} $\textrm{Vect} \{ X,Y\}$, and that the length of a
horizontal curve is computed asserting that $(X,Y)$ is an orthonormal basis of
this horizontal space in each point.

The Heisenberg group $\dH$ is topologically homeomorphic to $\dR^3$ and the
Lebesgue measure on $\dR^3$ is a Haar measure of $\dH$ (translation invariant)
but in the case $\beta=0$ the Hausdorff dimension of the $\dH$ for the
Carnot\,--\,Carathéodory metric is $4$.

Moreover, in the sub-elliptic case $\beta=0$, the Car\-not-Cara\-théo\-do\-ry
distance admits the following continuous family of dilation operators:
\[\Dil_\lambda(x,y,z)=(\lambda x,\lambda y,\lambda^2 z); \; \lambda>0.\]

A well known fact is that the Car\-not-Cara\-théo\-do\-ry distance is
equivalent to all homogeneous norm, see for instance \cite[Prop.\
5.1.4]{MR2363343}. In particular there exist constants $c_2>c_1>0$ such that
\begin{equation}\label{eq:d}
c_1( r^2+|z|)  \leq d(e,g)^2 \leq c_2( r^2+|z|);
\end{equation}
for all $ g= (x,y,z)\in \dH$ and $r^2:=x^2 +y^2$. 

\subsubsection*{Random walks on $\dH$}
Let $\beta\geq0$ be a real parameter. Let $(x_n,y_n,z_n)_{n\geq0}$ be
independent and identically distributed random variables on $\dR^3$ (not
necessarily Gaussian) with zero mean and covariance matrix
$\diag(1,1,\beta^2)$. Now set $S_0:=0$ and for all $n\geq1$,
\begin{equation}\label{eq:Sn}
  S_n:=(X_n,Y_n,Z_n):=
  \Bigr(\frac{x_1}{\sqrt{n}},\frac{y_1}{\sqrt{n}},\frac{z_1}{\sqrt{n}}\Bigr)
  \cdots
  \Bigr(\frac{x_n}{\sqrt{n}},\frac{y_n}{\sqrt{n}},\frac{z_n}{\sqrt{n}}\Bigr).
\end{equation}
The sequence ${(S_n)}_{n\geq0}$ is a random walk on $\dH$ started from the
origin and with i.i.d.\ ``non commutative multiplicative increments'' given by
a triangular array.
% \[
% {\Bigr(\frac{x_k}{\sqrt{n}},\frac{y_k}{\sqrt{n}},\frac{z_k}{\sqrt{n}}\Bigr)}_{n\geq1,1\leq
% k\leq n}.
% \]
In exponential coordinates,
\[
X_n = \frac{1}{\sqrt{n}}\sum_{i=1}^n x_i,\quad
Y_n = \frac{1}{\sqrt{n}}\sum_{i=1}^n y_i,\quad
Z_n = A_n+\frac{1}{\sqrt{n}} \sum_{i=1}^{n} z_{i}
\]
where
\[
A_n := \frac{1}{2n} \sum_{i=1}^n \sum_{j=1}^{n} x_i \epsilon_{ij}  y_j
\quad\text{and}\quad
\epsilon_{i,j}:=\mathbf{1}_{j>i}-\mathbf{1}_{j<i}.
\]
The random variable $A_n$ is the algebraic area between the path
${(X_k,Y_k)}_{0\leq k\leq n}$ of a random walk in $\dR^2$ and its chord
$[(0,0),(X_n,Y_n)]$. With 
\[
\Dil_t(x,y,z)=(tx,ty,t^2z)
\]
being the dilation operator on $\dH$, we have
\[
(X_n,Y_n,A_n)=\Dil_{\frac{1}{\sqrt{n}}}((x_1,y_1,0)\cdots(x_n,y_n,0)).
\]
According to a Functional Central Limit Theorem (or Invariance Principle) on
Lie groups due to Daniel Stroock and Srinivasa Varadhan~\cite{MR0517406} (see
also Donald Wehn~\cite{MR0153042}, cited in \cite{MR1649603}),
\begin{align}\label{eq:FCLT}
  {\bigl(S_{\lfloor nt\rfloor}\bigr)}_{t\geq0}
  \quad 
  \underset{n\to\infty}{\overset{\text{law}}{\longrightarrow}} 
  \quad
  {\bigr(\bX_t,\bY_t,\bZ_t\bigr)}_{t\geq0}
  ={\bigr(\bX_t,\bY_t,\bA_t+ \beta\bW_t\bigr)}_{t\geq0}
\end{align}
where ${(\bX_t,\bY_t)}_{t\geq0}$ is a standard Brownian motion on $\dR^2$
started from the origin, where ${(\bW_t)}_{t\geq0}$ is a standard Brownian
motion on $\dR$ started from the origin and independent of
${(\bX_t,\bY_t)}_{t\geq0}$, and where $(\bA_t)_{t\geq0}$ is the Lévy area of
${(\bX_t,\bY_t)}_{t\geq0}$, in other words the algebraic area between the
Brownian path and its chord, seen as a stochastic integral:
\[
\bA_t:=\frac{1}{2}\Bigr(\int_0^t\!\bX_s\dd\bY_s-\int_0^t\!\bY_s\dd\bX_s\Bigr).
\]
\subsubsection*{The heat process on $\dH$}
The stochastic process $(\bH_t)_{t\geq0}=(h\cdot
(\bX_t,\bY_t,\bZ_t))_{t\geq0}$ started from $\bH_0=h$ is a Markov diffusion
process on $\dR^3$ admitting the Lebesgue measure as an invariant and
reversible measure. The Markov semigroup ${(P_t)}_{t\geq0}$ of this process is
defined for all $t\geq0$, $h\in\dH$, and bounded measurable $f:\dH\to\dR$, by
\[
P_t(f)(h) := \Esp(f(\bH_t)\mid \bH_0=h).
\]
For all $t>0$ and $h\in\dH$, the law of $\bH_t$ conditionally on $\bH_0=h$
admits a density and
\[
P_t(f)(h)=\int_{\dH}\!f(g)p_t(h,g)\dd g.
\]
Estimates on the heat kernel $p_t$ are available, see
\cite{MR1776501,MR2719560,MR973806}. For instance when $\beta=0$, there exist
constants $C_2>C_1>0$ such that for all $g=(x,y,z)\in\dH$ and $t>0$,
\begin{equation}\label{eq:pt}
  \frac{C_1}{\sqrt{t^4+t^3rd(e,g)}} %
  \exp\Bigr(-\frac{d^2(e,g)}{4t}\Bigr)
  \leq p_t(e,g) 
  \leq\frac{C_2}{\sqrt{t^4+t^3rd(e,g)}}
  \exp\Bigr(-\frac{d^2(e,g)}{4t}\Bigr)
\end{equation}
where $d$ is the Carnot\,--\,Carathéodory distance and where $r^2:=x^2+y^2$.

Let us define the family of probability measures (which depends on the  parameter $\beta$)
\[
\gamma_t:=\mathrm{Law}(\bH_t\mid \bH_0=0)=P_t(\cdot)(0).
\]
The infinitesimal generator is the linear second order operator
\[
L=\frac{1}{2}(X^2+Y^2+\beta^2 Z^2)
\]
where $X,Y,Z$ are as in~\eqref{eq:XYZ}.
% where
% \[
% X:=\partial_x-\frac{1}{2}y\partial_z, \quad
% Y:=\partial_y+\frac{1}{2}x\partial_z, \quad\text{and}\quad Z:=\partial_z.
% \]
The Schwartz space $\mathrm{Schwartz}(\dH,\dR)$ of rapidly decaying
$\mathcal{C}^\infty$ functions from $\dH\equiv\dR^3$ to $\dR$ is contained in
the domain of $L$ and is stable by $L$ and by $P_t$ for all $t\geq0$. By the
Dirac commutation relations $[X,Y]=Z=\partial_z$ and $[X,Z]=[Y,Z]=0$, the
operator $L$ is hypoelliptic, and by the Hörmander theorem $P_t$ admits a
$\cC^\infty$ kernel. The operator $L$ is elliptic if $\beta>0$ and not
elliptic if $\beta=0$ (singular diffusion matrix).

The operator $L$ acts as the two dimensional Laplacian on functions depending
only on $x,y$ and not on $z$. The one parameter family of operators obtained
from $L$ when $\beta$ runs through the interval $[0,1]$ interpolates between
the sub-elliptic or sub-Riemannian Laplacian $\frac{1}{2}(X^2+Y^2)$ (for
$\beta=0$) and the elliptic or Riemannian Laplacian $\frac{1}{2}(X^2+Y^2+Z^2)$
(for $\beta=1$). The sub-Riemannian and Riemannian Brownian motions
${(\bH_t)}_{t\geq0}$ have independent and stationary (non commutative)
increments and are Lévy processes associated to non commutative)
convolution semigroups ${(P_t)}_{t\geq0}$ on $\dH$. When $\beta=0$ the
probability measures $\gamma_t$ behaves very well with respect to dilation,
can be seen as a Gaussian measure on $\dH$, and a formula (oscillatory
integral) for the kernel of $P_t$ was computed by Paul Lévy using Fourier
analysis. See the books~\cite{MR1867362,MR2154760,MR1439509} and references
therein for more information and details on this subject.
  
\subsubsection*{Logarithmic Sobolev inequalities}
The entropy of $f:\dH\to[0,\infty)$ with respect to a probability measure
$\mu$ is defined by
\[
\Ent_\mu(f) 
:= 
\Esp_\mu(\Phi(f))-\Phi(\Esp_\mu(f))
\quad\text{with}\quad
\Esp_\mu(f):=\int\!f\dd\mu
\]
where $\Phi(u)=u\log(u)$. A logarithmic Sobolev inequality is of the form 
\[
\Ent_{\mu}(f^2) \leq \int T(f) \dd \mu
\] 
where $T$ is a ``good'' functional quadratic form. The most classical version
involves $T = \Gamma$ and contains many geometrical informations. The book
\cite{MR3155209} contains a general introduction to Sobolev type functional
inequalities for diffusion processes. However (see the discussion below), the
classical ``carré du champ'' does not capture the whole geometry of $\dH$.
Define a weighted ``carré du champ'' $T_{a} = \Gamma + a \Gamma^{Z}$, where
$a$ is a function and $\Gamma^{Z} f = (Zf)^{2} = (\partial_{z} f)^{2}$. Such a
gradient will naturally arise in the logarithmic Sobolev inequality we derive from the non commutativity.

\subsection*{Main results}

We start with the left-invariant diffusion operator $ L= \frac{1}{2} (X^2+Y^2
+ \beta^2 Z^2)$ for $\beta \geq 0$ on the Heisenberg group. In the case
$\beta>0$, the operator is elliptic and it is not hard to see that a usual
logarithmic Sobolev inequality holds for its heat kernel. Usual means here
that the energy in the right hand side is given by the ``carré du champ''
operator $\Gamma$ associated to $L$. Indeed, for $\beta>0$, $L$ can then be
thought of as the Laplace\,--\,Beltrami operator of a Riemannian manifold
whose Ricci curvature is actually constant and the Bakry\,--\,Émery theory
applies. The case $\beta=0$, is much more involved and have attracted a lot of
attention. Indeed, the operator $L$ is not anymore elliptic but is still
sub-elliptic. The Ricci curvature tends to $-\infty$ when $\beta $ goes to 0
and the Bakry\,--\,Émery theory fails. In this situation, the ``carré du
champ'' operator contains only the horizontal part of the gradient. The
question whether a logarithmic Sobolev inequality holds was answered
positively by Hong-Quan Li in~\cite{MR2240167} (see \eqref{eq:h:lsi:li}), see
also \cite{MR2462581,MR2558178} and \cite{MR2124868}.

In a different direction, even if the classical Bakry\,--\,Émery theory fails,
Fabrice Baudoin and Nicola Garofalo developed in~\cite{baudoin-garofalo} a
generalization of the curvature criterion which is well adapted to the
sub-Riemannian setting. One can then obtain some (weaker) logarithmic Sobolev
inequalities with an elliptic gradient in the energy (see
\eqref{eq:lsi-ell-nu}).

Our approach is different. We follows the method developed by Leonard Gross in
\cite{MR0420249} for the Gaussian and in \cite{MR1161977} for the path space
on \emph{elliptic} Lie groups. It is based on the tensorization property of
the logarithmic Sobolev inequality and on the Central Limit Theorem for a
random walk. It applies indifferently both for the sub-elliptic ($\beta=0$) or
the elliptic ($\beta>0$) Laplacian on the Heisenberg group. At least when
$\beta >0$, our main result Theorem \ref{th:} below is in a way an explicit
version of the abstract Theorem 4.1 in \cite{MR1161977}.

The interest in our result is double: we compute explicitly for the first time
the gradient which appears in the right hand side of Theorem 4.1 in
\cite{MR1161977} in the case of the Heisenberg group for all $\beta\geq0$, and
we show, by looking at the case $\beta=0,$ that the method of Gross gives a
non degenerate result for a sub-Riemannian model. This is surprising and
unexpected.

% We focus on the sub-elliptic case ($\beta=0$). We study sub-elliptic
% Brownian bridge on the Heisenberg group and deduce a weighted logarithmic
% Sobolev inequality (see Corollary \ref{co:}).
 
The next theorem, that is the main result of the paper and is proved in Section~\ref{se:proof:th:}, states the logarithmic Sobolev inequality for $\gamma = \gamma_{1}$.
From the scaling property of the heat kernel, we can easily deduce a logarithmic Sobolev inequality for $\gamma_{t}$ for every $t > 0$.
\begin{theorem}[Logarithmic Sobolev inequality]\label{th:}
  For all $\beta\geq0$ and $f\in\mathrm{Schwartz}(\dH,\dR)$,
  \begin{equation}\label{eq:beta:lsi}
    \Ent_{\gamma}(f^2)  
    \leq  2\int_0^1\!\Esp(g(\bH_1,\bH_t))\dd t
  \end{equation}
  where for $h=(x,y,z)$ and $h'=(x',y',z')$,
  \begin{align*}
  g(h,h')
   := &\, \left(\partial_xf(h)-\frac{y-2y'}{2}\partial_zf(h)\right)^2
  +\left(\partial_yf(h)+\frac{x-2x'}{2}\partial_zf(h)\right)^2
  +\beta^2\left(\partial_z f(h)\right)^2\\
   = &\, ((X + y' Z)f(h))^{2} + ((Y - x' Z)f(h))^{2} + \beta^{2} (Zf(h))^{2}.
  \end{align*}
\end{theorem}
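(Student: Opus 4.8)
The plan is to follow Gross's original strategy: obtain the logarithmic Sobolev inequality on $\dH$ as a limit of tensorized inequalities on the increments, using the Functional Central Limit Theorem \eqref{eq:FCLT} to pass from the random walk $S_n$ to the heat process $\bH_1$. The starting point is that each increment $(x_i/\sqrt n, y_i/\sqrt n, z_i/\sqrt n)$ can be taken Gaussian (this is legitimate since the limit law in \eqref{eq:FCLT} depends only on the first two moments, so I am free to choose the most convenient increment distribution), and a standard Gaussian on $\dR^3$ satisfies the optimal logarithmic Sobolev inequality with constant governed by its covariance. By the tensorization property of the logarithmic Sobolev inequality, the product measure $\mu_n$ of the $n$ increments on $(\dR^3)^n$ satisfies a logarithmic Sobolev inequality whose energy is the sum of the squared Euclidean gradients in each block of coordinates.

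The key step is to rewrite this tensorized inequality in terms of a function on $\dH$. I would fix $f\in\mathrm{Schwartz}(\dH,\dR)$ and define $F_n:(\dR^3)^n\to\dR$ by $F_n\bigl((x_i,y_i,z_i)_{i=1}^n\bigr)=f(S_n)$, i.e.\ composing $f$ with the multiplication map \eqref{eq:Sn} that produces $S_n$ from the increments. Applying the tensorized inequality to $F_n$, the left-hand side converges by \eqref{eq:FCLT} to $\Ent_\gamma(f^2)$ since $S_n\to\bH_1$ in law. The heart of the matter is the right-hand side: I must compute the Euclidean partial derivatives $\partial_{x_k}F_n$, $\partial_{y_k}F_n$, $\partial_{z_k}F_n$ via the chain rule through the group product. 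Because the product is non commutative, differentiating $f(S_n)$ with respect to the $k$-th increment produces not the plain left-invariant fields $X,Y,Z$ evaluated at $S_n$, but modified fields that carry a correction depending on the \emph{partial product} $S_{k-1}$ (the portion of the walk accumulated before step $k$) through the area term $A_n$. This is exactly the mechanism that will generate the shifted fields $X+y'Z$ and $Y-x'Z$ appearing in $g(h,h')$.

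The hard part will be controlling this correction term and identifying its limit. Concretely, differentiating the area functional $A_n=\frac{1}{2n}\sum_{i,j} x_i\epsilon_{ij}y_j$ with respect to $x_k$ and $y_k$ brings down weighted sums of the other coordinates, which after rescaling converge to the partial Brownian positions $\bX_t,\bY_t$ at an intermediate time $t$; the index $k\approx nt$ becomes the integration variable $t\in[0,1]$, and the outer average over $k$ becomes the integral $\int_0^1\dd t$. I would make this precise by writing the energy as a Riemann sum over $k$, showing that for $k=\lfloor nt\rfloor$ the contribution converges to $\Esp\bigl(((X+\bY_t Z)f(\bH_1))^2+((Y-\bX_t Z)f(\bH_1))^2+\beta^2(Zf(\bH_1))^2\bigr)$, and that $(\bX_t,\bY_t)$ is precisely the intermediate position $h'=(x',y',z')$ entering $g$. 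Matching $y'=\bY_t$ and $x'=\bX_t$ yields the fields $X+y'Z$ and $Y-x'Z$ exactly as stated. The two points requiring genuine care are the justification of exchanging the limit $n\to\infty$ with the Riemann sum and with the entropy/energy functionals (uniform integrability, using the Schwartz decay of $f$ and moment bounds on the walk), and the bookkeeping of which intermediate position appears—here the joint convergence of $(S_n, S_{\lfloor nt\rfloor})$ to $(\bH_1,\bH_t)$ is what delivers the joint law of $(h,h')=(\bH_1,\bH_t)$ under the expectation on the right-hand side of \eqref{eq:beta:lsi}.
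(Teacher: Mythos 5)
Your proposal is correct and follows essentially the same route as the paper's proof: Gaussian increments, the tensorized optimal Gaussian logarithmic Sobolev inequality on $\dR^{3n}$ applied to $f\circ s_n$, the chain rule through the non commutative product producing the shifted fields via the derivative of the area term, and the Riemann sum over the increment index converging to the time integral with the joint law of $(\bH_1,\bH_t)$. The only implementation detail you leave open and the paper makes explicit is how to justify the joint convergence: it realizes the increments as rescaled Brownian increments $\xi_{n,i}=\sqrt n(\bX_{i/n}-\bX_{(i-1)/n})$, so that $(S_n,X_{n,i},Y_{n,i})$ is coupled exactly to $(\bH_1,\bX_1-2\bX_{i/n},\bY_1-2\bY_{i/n})$ up to $o(1)$, which is precisely the uniform-integrability bookkeeping you flag as needing care.
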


% The inequality \eqref{eq:beta:lsi} can be rewritten as
%\[
%\Ent_{\gamma}(f^{2}) \leq 2 \Esp_{\gamma}(T_{a} f) \quad\text{where}\quad
% a(g) := \Esp\left[ \int_{0}^{1} B_{t}^{2} + W_{t}^{2} \dd t\mid \bH_{1} = g
% \right].
%\]
% In this form the weight involving a Brownian bridge is clear and should be
% compared to the elliptic inequality of Gross (\cite[formula
% 4.1]{MR1161977}).
The shape of the right hand side of~\eqref{eq:beta:lsi} comes from the fact
that the increments are not commutative: the sum in $S_n$ produces along
\eqref{eq:FCLT} the integral from $0$ to $1$.

The following corollary is obtained via Brownian Bridge and heat
kernel estimates.

\begin{corollary}[Weighted logarithmic Sobolev inequality]\label{co:} 
  If $\beta=0$ then there exist a constant $C>0$ such that for all
  $f\in\mathrm{Schwartz}(\dH,\dR)$,
  \begin{equation}\label{eq:beta:lsi:w}
    \Ent_{\gamma}(f^{2})  
    \leq  2\Esp_{\gamma}\Bigr((\partial_x f)^2+(\partial_y f)^2
    +  C(1+x^2+y^2+|z|)(\partial_zf)^2\Bigr). 
  \end{equation}
\end{corollary}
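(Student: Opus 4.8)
The plan is to feed Theorem~\ref{th:} into the Markov structure of the heat process and then replace the intermediate point $\bH_t$ by its conditional law given the endpoint $\bH_1$, which is a Brownian bridge on $\dH$. Setting $\beta=0$ in \eqref{eq:beta:lsi}, the integrand is $g(\bH_1,\bH_t)=((X+\bY_t Z)f(\bH_1))^2+((Y-\bX_t Z)f(\bH_1))^2$, so only the horizontal coordinates $(\bX_t,\bY_t)$ of the intermediate point enter, and only linearly and quadratically. Conditioning on $\bH_1=h=(x,y,z)$ and using that $(P_t)$ is Markov and reversible with respect to the Haar (Lebesgue) measure, the conditional law of $\bH_t$ is the time-$t$ marginal of the Heisenberg Brownian bridge from $e$ to $h$, whose density at $g$ is $p_t(e,g)p_{1-t}(g,h)/p_1(e,h)$; I denote by $\Esp_h^t[\cdot]$ the corresponding expectation. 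Rewriting the right-hand side of \eqref{eq:beta:lsi} as $2\Esp_\gamma\bigl[\int_0^1\Esp_h^t[g(h,\cdot)]\dd t\bigr]$ reduces everything to controlling the first two moments of the horizontal marginal of this bridge.

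Next I would expand the square and separate the resulting terms. In the left-invariant frame, after integrating in $t$, one obtains
\[
\int_0^1\Esp_h^t[g(h,\cdot)]\dd t=(Xf)^2+(Yf)^2+2J_y(h)(Xf)(Zf)-2J_x(h)(Yf)(Zf)+W(h)(Zf)^2,
\]
where $J_x(h)=\int_0^1\Esp_h^t[\bX_t]\dd t$, $J_y(h)=\int_0^1\Esp_h^t[\bY_t]\dd t$ are the time-averaged means and $W(h)=\int_0^1\Esp_h^t[\bX_t^2+\bY_t^2]\dd t$ is the time-averaged horizontal second moment. The first moments are pinned down by reversibility: the time reversal $t\mapsto 1-t$ composed with left translation by $h^{-1}$ maps the bridge from $e$ to $h$ to the bridge from $e$ to $h^{-1}$, and combining this with the linear symmetries of $L$ (the rotations in $(x,y)$ and the reflections flipping the sign of the area) identifies the bridge mean with its chord up to a rotational correction that is odd in $z$. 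Passing to the Euclidean frame via $X=\partial_x-\tfrac y2\partial_z$, $Y=\partial_y+\tfrac x2\partial_z$, these first-moment terms recombine with the frame-change terms so that the horizontal energy appears with its natural weight $(\partial_x f)^2+(\partial_y f)^2$, the remaining first- and second-moment contributions being collected into the scalar multiplying $(\partial_z f)^2$; the residual cross terms are controlled by $d(e,h)$ and absorbed into that weight.

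The heart of the matter is then the estimate
\[
W(h)=\int_0^1\Esp_h^t[\bX_t^2+\bY_t^2]\dd t\le C\bigl(1+d(e,h)^2\bigr),
\]
after which the distance equivalence \eqref{eq:d}, namely $d(e,h)^2\asymp r^2+|z|$ with $r^2=x^2+y^2$, immediately yields the weight $C(1+x^2+y^2+|z|)$ of \eqref{eq:beta:lsi:w}. To prove this I would insert the two-sided Gaussian heat kernel bounds \eqref{eq:pt} into the bridge density $p_t(e,g)p_{1-t}(g,h)/p_1(e,h)$: the exponent behaves like $d(e,g)^2/(4t)+d(g,h)^2/(4(1-t))-d(e,h)^2/4$, so the bridge mass concentrates, in the $L^2$-averaged sense over $t\in(0,1)$, on points $g$ with $d(e,g)+d(g,h)\lesssim d(e,h)+1$; on this set $\bX_t^2+\bY_t^2\le r(g)^2\lesssim d(e,g)^2\lesssim d(e,h)^2$, and integrating in $t$ (handling the normalization and the prefactors) gives the claimed $O(1+d(e,h)^2)$ bound.

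The main obstacle is precisely this last estimate. Unlike in the commutative Euclidean case, the horizontal coordinates of the Heisenberg bridge are genuinely large when $|z|$ is large: realizing a large signed area $z$ forces the path to sweep a correspondingly large region, which is exactly why the weight must grow linearly in $|z|$ rather than remaining bounded. Turning the two-sided kernel bounds \eqref{eq:pt} and the distance equivalence \eqref{eq:d} into a clean, uniform-in-$h$ control of the time-averaged horizontal second moment — in particular keeping the contribution integrable across the endpoints $t=0,1$ where the prefactors in \eqref{eq:pt} blow up, and carrying the $r$-dependent refinement of \eqref{eq:pt} through the computation — is the delicate, genuinely sub-Riemannian step, and is where the heat-kernel estimates are indispensable.
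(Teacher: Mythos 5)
Your architecture is essentially the paper's: set $\beta=0$ in \eqref{eq:beta:lsi}, condition on $\bH_1=h$ so that only the bridge marginal with density $p_t(e,g)p_{1-t}(g,h)/p_1(e,h)$ enters (Bayes), and reduce everything to the estimate $\int_0^1\Esp(\bX_t^2+\bY_t^2\mid\bH_1=h)\dd t\le C(1+d^2(e,h))$ followed by the distance equivalence \eqref{eq:d}. Two steps, however, do not hold up as written.

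First, the cross terms. After expanding and averaging in $t$ you are left with $a(h)\,(\partial_xf\,\partial_zf)(h)$ where $a(h)=y-2\int_0^1\Esp(\bY_t\mid\bH_1=h)\dd t$, plus its analogue in $x$. Your own observation shows $a$ does \emph{not} vanish pointwise: the time-averaged bridge mean equals $\tfrac12(x,y)$ only up to a correction that is odd in $z$. A nonzero cross term $(\partial_xf)(\partial_zf)$ cannot be ``absorbed into the scalar multiplying $(\partial_zf)^2$'': any Young-type absorption $|a\,uv|\le\veps u^2+\tfrac{a^2}{4\veps}v^2$ necessarily pushes the coefficient of $(\partial_xf)^2+(\partial_yf)^2$ strictly above $2$, while \eqref{eq:beta:lsi:w} asserts exactly $2$ (which is sharp, by restriction to $z$-independent $f$). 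The paper instead cancels these terms \emph{exactly} by symmetrization: apply \eqref{eq:beta:lsi} both to $f$ and to $f$ precomposed with the central symmetry $h\mapsto h^{-1}$, under which $\Ent_\gamma(f^2)$ and the diagonal terms are unchanged while the cross terms change sign; averaging the two inequalities removes them. Your ``odd in $z$'' remark could be turned into the same cancellation, but you must symmetrize the inequality, not estimate the cross terms.

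Second, the bridge moment bound, which you rightly call the heart of the matter, is left at the level of a heuristic. ``The bridge mass concentrates on $d(e,g)+d(g,h)\lesssim d(e,h)+1$'' does not by itself control $\int p_t(e,g)\,d^2(e,g)\,p_{1-t}(g,h)\dd g\,/\,p_1(e,h)$ uniformly in $h$ and integrably in $t$. The paper's Lemma \ref{le:bridge} does this by (i) trading the polynomial factor $d^2(e,g)$ for a loss in the Gaussian exponent via $x\e^{-x}\le\tfrac{1}{\e\veps}\e^{-(1-\veps)x}$, which by \eqref{eq:pt} turns $p_t$ into $C\veps^{-1}t\,p_{t/(1-\veps)}$; (ii) applying Chapman--Kolmogorov to get $p_{1+\veps t/(1-\veps)}(e,h)$; (iii) bounding the ratio $p_{1+\delta}(e,h)/p_1(e,h)$ by $C\e^{\delta d^2(e,h)/4}$ using \emph{both} the upper and the lower bound in \eqref{eq:pt}; and (iv) choosing $\veps\asymp1/(t\,d^2(e,h))$ to beat that exponential, yielding $\Esp(\bX_t^2+\bY_t^2\mid\bH_1=h)\le C(t^2d^2(e,h)+t)$. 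None of (i)--(iv) appears in your proposal, and the normalization $1/p_1(e,h)$ --- the genuinely delicate point, since it is exponentially small in $d^2(e,h)$ --- is precisely what your concentration heuristic does not address.
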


Corollary~\ref{co:} is proved in Section~\ref{se:proof:co:}.

% Leonard Gross used in~\cite{MR0420249} and~\cite{MR1161977} the Central
% Limit Theorem to prove logarithmic Sobolev inequality on \emph{elliptic} Lie
% groups. In a surprising way, the very same method works on the
% sub-Riemannian setting of the Heisenberg group, and gives a non degenerate
% result. Indeed it was a common thought to believe that in the non-elliptic
% case, the Central Limit Theorem will produce asymptotically the collapse or
% the explosion of the right hand side.
%
% Theorem~\ref{th:} is proved in Section~\ref{se:proof:th:}, by following the
% method of Gross. Corollary~\ref{co:} is proved in
% Section~\ref{se:proof:co:}, by using an expansion of \eqref{eq:beta:lsi},
% and probabilistic (Bayes formula), analytic (bounds for the heat kernel on
% $\dH$), and geometric (bounds for the Carnot\,--\,Carathéodory distance)
% arguments for the control of the density of the Brownian bridge.

\subsection*{Structure of the paper}

Section \ref{se:comp} provides a discussion and a comparison with other
inequalities such as the inequality of H.-Q. Li and the ``elliptic''
inequality of Baudoin and Garofalo. Section~\ref{se:proof:th:} is devoted to
the proof of Theorem~\ref{th:} which is based on the method of Gross using a
random walk and the CLT. Section~\ref{se:proof:co:} provides the proof of
Corollary~\ref{co:} by using an expansion of \eqref{eq:beta:lsi}, a
probabilistic (Bayes formula), analytic (bounds for the heat kernel on $\dH$),
and geometric (bounds for the Carnot\,--\,Carathéodory distance) arguments for
the control of the density of the Brownian bridge. For completeness, a short
proof of the ``elliptic'' inequality of Baudoin and Garofalo in the case of
the Heisenberg group (inequalities \eqref{eq:lsi-ell-nu}-\eqref{eq:lsi-ell-w})
is provided in Section \ref{se:another}. Finally, in Section \ref{se:carnot}
we give the extension of our main result (Theorem \ref{th:}) to the case of
homogeneous Carnot groups of rank two (Theorem \ref{th:carnot}).

% Indeed it was a common thought to believe that in the non-elliptic case, the
% Central Limit Theorem will produce asymptotically the explosion of the
% right hand side.

\section{Discussion and comparison with other inequalities}%
\label{se:comp}

\subsubsection*{Novelty}

Taking $\beta=0$ in~\eqref{eq:beta:lsi} provides a new sub-elliptic
logarithmic Sobolev inequality for the sub-Riemannian Gaussian law $\gamma$,
namely, for all $f\in\mathrm{Schwartz}(\dH,\dR)$,
\begin{equation}\label{eq:hlsi}
  \Ent_{\gamma}(f^2)
  \leq
  2\int_0^1\!\Esp(g(\bH_1,\bH_t))\dd t
\end{equation}
where
\[
g(h,h')
:=
{\left(\partial_xf(h)-\frac{y-2y'}{2}\partial_zf(h)\right)^2}
+{\left(\partial_yf(h)+\frac{x-2x'}{2}\partial_zf(h)\right)}^2.
\]

\subsubsection*{Horizontal optimality} 

The logarithmic Sobolev inequality~\eqref{eq:hlsi}, implies the optimal
logarithmic Sobolev inequality for the standard Gaussian distribution
$\cN(0,I_2)$ on $\dR^2$ with the Euclidean gradient, namely, for all
$f\in\mathrm{Schwartz}(\dR^2,\dR)$,
\[
\Ent_{\cN(0,I_2)}(f^2)\leq 2\Esp_{\cN(0,I_2)}((\partial_xf)^2+(\partial_yf)^2).
\]
To see it, it suffices to express~\eqref{eq:hlsi} with a function $f$ that
does not depend on the third coordinate $z$. This shows in particular the
optimality (minimality) of the constant $2$ in front of the right hand side in
the inequality of Theorem~\ref{th:} and in~\eqref{eq:hlsi}.
  
\subsubsection*{Poincaré inequality} 

Recall that the variance of $f:\dH\to\dR$ with respect to $\mu$ is 
\[
\Var_\mu(f)
:=\int\!\Phi(f)\dd\mu-\Phi\Bigr(\int\!f\dd\mu\Bigr)
\quad\text{where this time}\quad
\Phi(u)=u^2.
\]
As usual, the logarithmic Sobolev inequality~\eqref{eq:hlsi} gives a Poincaré
inequality by linearization.
More precisely, replacing $f$ by $1+\veps f$ in
\eqref{eq:hlsi} gives, as $\veps\to0$,
\[
\Var_{\gamma}(f) \leq \int_0^1\!\Esp(g(\bH_1,\bH_t))\dd t.
\]

\subsubsection*{Comparison with H.-Q. Li inequality} 

For $\beta=0$, Hong-Quan Li has obtained in~\cite{MR2240167} (see also
\cite{MR2462581,MR2124868} for a Poincaré inequality) the following
logarithmic Sobolev inequality: there exists a constant $C_{\mathrm{LSI}}>0$
such that for all $f\in\mathrm{Schwartz}(\dH,\dR)$,
\begin{equation}\label{eq:h:lsi:li}
  \Ent_{\gamma}(f^2)\leq C_{\mathrm{LSI}}\Esp_{\gamma}((Xf)^2+(Yf)^2).
\end{equation}
The right hand side in~\eqref{eq:h:lsi:li} involves the ``carré du champ'' of
the sub-Laplacian $L$, namely the functional quadratic form: $\Gamma
(f,f):=\frac{1}{2} ( L(f^2)-2fLf) =(Xf)^2+(Yf)^2$. Following the by now
standard Bakry\,--\,Émery approach, the expansion of the scaled version shows
that necessarily $C_{\mathrm{LSI}} > 2$ but the optimal (minimal) constant
is unknown.

One can deduce from \eqref{eq:h:lsi:li} a weighted inequality. Namely, since
the random variables $-\bH_t$ and $\bH_t$ have the same law conditionally to
$\{\bH_0=0\}$, one can cancel out by symmetry, in average, the cross terms
involving $x\partial_xf\partial_z f$ and $y\partial_xf\partial_z f$ when
expanding the right hand side of the sum in~\eqref{eq:h:lsi:li} and its
rotated version. The symmetrized version of~\eqref{eq:h:lsi:li} that we
obtained in this way appears as a weighted logarithmic Sobolev inequality: for
all $f\in\mathrm{Schwartz}(\dH,\dR)$,
\begin{equation}\label{eq:h:lsi:li:sym}
  \Ent_{\gamma}(f^2)\leq 
  C_{\mathrm{LSI}}\Esp_{\gamma}
  \Bigr((\partial_xf)^2+(\partial_yf)^2+\frac{x^2+y^2}{4}(\partial _zf)^2\Bigr).
\end{equation}
% We of course can get back~\eqref{eq:h:lsi:li} from~\eqref{eq:h:lsi:li:sym}.

\subsubsection*{Comparison with the  ``elliptic'' inequality of Baudoin-Garofalo}

Baudoin and Garofalo have developed in~\cite{baudoin-garofalo} a
generalization of the Bakry\,--\,Émery semigroup/curvature approach well
adapted to the sub-Riemannian setting, see also \cite[Prop.~4.11]{MR3601645},
\cite{MR2885961}, and \cite[Prop.~5.3.7~p.~129]{bonnefont-these}. Their
framework is well-suited for studying weighted functional inequalities such as
\eqref{eq:beta:lsi:w} and \eqref{eq:h:lsi:li:sym}. More precisely, it allows
first to derive the following result: if $\beta=0$ then for all real number
$\nu>0$ and all function $f\in\mathrm{Schwartz}(\dH,\dR)$,
\begin{equation}\label{eq:lsi-ell-nu}
  \Ent_{\gamma}(f^2)  
  \leq 2 \nu (\e^\frac{1}{\nu}-1) 
  \Esp_\gamma\Bigr((Xf)^2 + (Yf)^2 + \nu (Zf)^2\Bigr).
\end{equation}
The symmetrized version of~\eqref{eq:lsi-ell-nu} is given by the following new
weighted logarithmic Sobolev inequality:
%, lying between~\eqref{eq:beta:lsi:w} and~\eqref{eq:h:lsi:li:sym}.
if $\beta=0$ then for all real number $\nu>0$ and all function
$f\in\mathrm{Schwartz}(\dH,\dR)$,
\begin{equation}\label{eq:lsi-ell-w}
  \Ent_{\gamma}(f^2)  
  \leq 2 \nu (e^\frac{1}{\nu}-1)
  \Esp_\gamma\Bigr((\partial_x f)^2 + (\partial_y f)^2 
  + \bigr( \nu  + \frac{x^2+y^2}{4}\bigr)(\partial_z f)^2\Bigr).
\end{equation}
Our weighted inequality \eqref{eq:beta:lsi:w} is close to the weighted
inequalities \eqref{eq:h:lsi:li:sym} and \eqref{eq:lsi-ell-w}.
% and none of them is optimal in a sense.

For the reader convenience, we provide a short proof of
\eqref{eq:lsi-ell-nu} and \eqref{eq:lsi-ell-w} in Section~\ref{se:another}.
This proof is the Heisenberg group specialization of the proof given in
\cite[Prop.~5.3.7~p.~129]{bonnefont-these} (PhD thesis of the first author)
see also \cite[Prop.~4.11]{MR3601645} (PhD advisor of the first author). 

\subsubsection*{Extensions and open questions.}
%
%The Heisenberg group is the simplest non-trivial example of a Carnot (i.e.\
%stratified nilpotent Lie) group. Those groups have a strong geometric meaning
%both in standard and stochastic analysis (see for instance~\cite{MR2154760}
%for the latter point). A version of Theorem~\ref{th:} should be available on
%those groups even though the function $g$ might not be computable explicitly
%due to the complexity of the group operation. The bounds on the distance and
%the heat kernel used to derive the weighted inequality~\eqref{eq:beta:lsi:w}
%are not available on general Carnot groups and it should require more work to
%obtain an equivalent of Corollary~\ref{co:}. As a comparison, note that a
%version of~\eqref{eq:h:lsi:li} exists on groups with a so called H-structure
%(see~\cite{MR2557945}) but a general version on Carnot groups is unknown due
%to the lack of general estimates for the heat kernel. Note also that the
%criterion of Baudoin and Garofalo only holds for some rank two sub-Riemannian
%manifolds with symmetries and does not apply for all Carnot Groups. In the
%context and spirit of the work of Leonard Gross~\cite{MR1161977} in the
%elliptic case, an approach at the level of paths space should be available. It
%is also natural to ask about a direct analytic proof or semigroup proof of the
%inequality of Theorem~\ref{th:}, without using the Central Limit Theorem.

The Heisenberg group is the simplest non-trivial example of a Carnot group in
other words stratified nilpotent Lie group. Those groups have a strong
geometric meaning both in standard and stochastic analysis, see for
instance~\cite{MR2154760} for the latter point. Theorem~\ref{th:} is extended
to homogeneous Carnot group of rank two in Section \ref{se:carnot}. Note that
the criterion of Baudoin and Garofalo \cite{baudoin-garofalo} holds for Carnot
groups of rank two and an inequality similar to \eqref{eq:lsi-ell-nu} holds in
this context, see \cite{MR3601645}.

% Theorem~\ref{th:} should be available on those groups even though the
% function $g$ might not be computable explicitly due to the complexity of the
% group operation. *
The bounds on the distance and the heat kernel used to derive the weighted
inequality~\eqref{eq:beta:lsi:w} are not available for general Carnot groups
and it should require more work to obtain an equivalent of
Corollary~\ref{co:}. As a comparison, note that a version
of~\eqref{eq:h:lsi:li} exists on groups with a so called H-structure,
see~\cite{MR2557945}, but a general version on Carnot groups is unknown due to
the lack of general estimates for the heat kernel. An extension of Theorem
\ref{th:} in the case of higher dimensional Carnot groups or in the case of
curved sub-Riemannian space as $CR$ spheres or anti-de Sitter spaces is
opened.

Moreover, in the context and spirit of the work of Leonard
Gross~\cite{MR1161977} in the elliptic case, an approach at the level of paths
space should be available. It is also natural to ask about a direct analytic
proof or semigroup proof of the inequality of Theorem~\ref{th:}, without using
the Central Limit Theorem.

\section{Proof of Theorem~\ref{th:}}
\label{se:proof:th:}

Fix a real $\beta\geq0$. Consider ${(x_n,y_n,z_n)}_{n\geq1}$ a sequence of
independent and identically distributed random variables with Gaussian law of
mean zero and covariance matrix $\mathrm{diag}(1,1,\beta^2)$. Let $S_n$ be as
in~\eqref{eq:Sn}. The Central Limit Theorem gives
\[
S_n\underset{n\to\infty}{\overset{\mathrm{law}}{\longrightarrow}} \gamma.
\]
The law $\nu_n$ of $S_n$ satisfies $\nu_n = (\mu_n)^{*n}$ where the
convolution takes place in $\dH$ and where $\mu_n$ is the Gaussian law on
$\dR^{3}$ with covariance matrix $\diag(1/n, 1/n, \beta^{2}/n)$.

For all $i=1,\ldots,n$, let us define
\[
S_{n,i} := (X_n, Y_n, Z_n, X_{n,i}, Y_{n,i})
\]
where
\[
X_{n,i} := - \frac{1}{\sqrt n} \sum_{j=1}^n \epsilon_{ij} x_j
\quad\text{and}\quad
Y_{n,i} :=  - \frac{1}{\sqrt n} \sum_{j=1}^n \epsilon_{ij} y_j.
\]
The optimal logarithmic Sobolev inequality for the standard Gaussian measure
$\cN(0,I_{3n})$ on $\dR^{3n}$ gives, for all
$g\in\mathrm{Schwartz}(\dR^{3n},\dR)$,
\[
\Ent_{\cN(0,I_{3n})}(g^2) \leq %
2\Esp_{\cN(0,I_{3n})}\Bigr( \sum_{i=1}^{n} (\partial_{x_{i}} g)^{2} %
+ (\partial_{y_{i}} g)^{2} + (\partial_{z_{i}} g)^{2} \Bigr).
\]
Let $s_n:\dR^{3n}\to\dH$ be the map such that
$S_n=s_n((x_1,y_1,z_1),\ldots,(x_n,y_n,z_n))$. For some
$f\in\mathrm{Schwartz}(\dH,\dR)$ the function $g = f(s_n)$ satisfies
\begin{align*}
  \partial_{x_i} g(s_n) 
  &= \frac{1}{\sqrt n}\Bigr(\partial_xf -\frac{Y_{n,i}}{2}\partial_zf\Bigr)(s_n),\\
  \partial_{y_i} g(s_n) 
  &= \frac{1}{\sqrt n}\Bigr(\partial_yf + \frac{X_{n,i}}{2} \partial_zf\Bigr)(s_n),\\
  \partial_{z_i} g(s_n) 
  &= \frac{\beta}{\sqrt n}(\partial_zf)(s_n).
\end{align*} 
It follows that for all $f\in\mathrm{Schwartz}(\dH,\dR)$, denoting
$\nu_n:=\mathrm{Law}(S_n)$,
\begin{equation}\label{eq:hlsi-step}
  \Ent_{\nu_n}(f^2) %
  \leq \frac{2}{n} \sum_{i=1}^{n}\Esp(h(S_{n,i})) %
  = 2\Esp\Bigr(\frac{1}{n}\sum_{i=1}^{n}h(S_{n,i})\Bigr)
\end{equation}
where $h:\dR^5\to\dR$ is defined from $f$ by
\[
h(x,y,z,x',y'):=
((\partial_x-\frac{y'}{2}\partial_z)f(x,y,z))^2
+
((\partial_y+\frac{x'}{2}\partial_z)f(x,y,z))^2
+ \beta^2  (\partial_z f(x,y,z))^2
\]
The right hand side of~\eqref{eq:hlsi-step} as $n\to\infty$ is handled by
explaining the law of $S_{n,i}$ through a triangular array of increments of
the process we anticipated in the limit. More precisely, let
${((\bX_t,\bY_t))}_{t\geq0}$ be a standard Brownian motion on $\dR^2$ started
from the origin, let ${(\bA_t)}_{t\geq0}$ be its Lévy area, and let
${(\bW_t)}_{t\geq0}$ be a Brownian motion on $\dR$ starting from the origin,
independent of ${(\bX_t,\bY_t)}_{t\geq0}$. Let us define, for $n\geq 1$ and
$1\le i \le n$,
\[
\xi_{n,i} := \sqrt n \left(\bX_{\frac{i}{n}} -  \bX_{\frac{i-1}{n}}\right),\quad
\eta_{n,i} := \sqrt n \left(\bY_{\frac{i}{n}} - \bY_{\frac{i-1}{n}}\right),\quad
\zeta_{n,i} := \sqrt n \left(\bW_{\frac{i}{n}} - \bW_{\frac{i-1}{n}}\right).
\]
For all fixed $n\geq1$, the random variables ${(\xi_{n,i})}_{1\leq i\leq n}$,
${(\eta_{n,i})}_{1\leq i\leq n}$, and ${(\zeta_{n,i})}_{1\leq i\leq n}$ are
independent and identically distributed with Gaussian law $\mathcal{N}(0,1)$.
Let us define now
\begin{align*}
  X_n & %
  :=  \frac{1}{\sqrt n} \sum_{i=1}^n \xi_{n,i},
  &
  Y_n & %
  := \frac{1}{\sqrt n} \sum_{i=1}^n \eta_{n,i},\\
  A_n & %
  :=\frac{1}{2n} \sum_{i=1}^n \sum_{j=1}^{n}\xi_{n,i}\epsilon_{i,j}  \eta_{n,i},
  &
  Z_n & %
  :=\beta\frac{1}{\sqrt n}\sum_{i=1}^n\zeta_{n,i} +A_n,\\
  X_{n,i} & %
  :=-\frac{1}{\sqrt n} \sum_{j=1}^n\epsilon_{i,j}\xi_{n,j},
  &
  Y_{n,i} & %
  :=-\frac{1}{\sqrt n} \sum_{j=1}^n\epsilon_{i,j}\eta_{n,i}.
\end{align*}
We have then the equality in distribution
\[
(X_n,Y_n,Z_n-A_n,X_{n,i},Y_{n,i}) %
\overset{d}{=} %
(\bX_1,\bY_1,\beta \bW_1, %
\bX_1-(\bX_{\frac{i-1}{n}}+\bX_{\frac{i}{n}}), %
\bY_1-(\bY_{\frac{i-1}{n}}+\bY_{\frac{i}{n}})).
\]
Moreover, as $i/n\to s\in[0,1]$, we have the convergence in distribution
\[
S_{n,i}:=(X_n,Y_n,Z_n,X_{n,i},Y_{n,i}) %
\underset{n\to\infty}{\overset{d}{\longrightarrow}} %
(\bX_1,\bY_1,\bA_1+\beta\bW_1,\bX_1-2\bX_s,\bY_1- 2\bY_s).
\]
It follows that for all continuous and bounded $h:\dR^5\to\dR$,
\begin{align*}
  \frac{1}{n}\sum_{i=1}^n\Esp(h(S_{n,i}))
  &=\int_0^1\!\Esp(
  h(X_n,Y_n,Z_n,X_{n,\frac{\lfloor tn\rfloor}{n}},Y_{n,\frac{\lfloor tn\rfloor}{n}})\dd t\\
  &\underset{n\to\infty}{\longrightarrow}
  \int_0^1\!\Esp(h(\bX_1,\bY_1,\bA_1+\beta\bW_1,\bX_1-2\bX_t,\bY_1-2\bY_t))\dd t.
\end{align*}

\section{Proof of Corollary~\ref{co:}}
\label{se:proof:co:}

Let us consider~\eqref{eq:beta:lsi} with $\beta=0$. By expanding the
right-hand side, and using the fact that the conditional law of $\bH_1$ given
$\{\bH_0=0\}$ is invariant by central symmetry, we get a symmetrized weighted
logarithmic Sobolev inequality: for all $f\in\mathrm{Schwartz}(\dH,\dR)$,
\begin{align*}
  \Ent_{\gamma}(f^2)
  &\leq 2\Esp\bigr((\partial_x f)^2(\bH_1) +( \partial_y f)^2(\bH_1) \bigr)\\
  &\quad+\frac{1}{2}\int_0^1\!\Esp\bigr(\Esp\bigr((\bX_1- 2\bX_t)^2
  +(\bY_1- 2\bY_t )^2\mid \bH_1\bigr)( \partial_z f)^2(\bH_1)\bigr)\dd t\\
  &\leq  2\Esp\bigr(( \partial_x f)^2(\bH_1)+( \partial_y f)^2(\bH_1)\bigr)\\
  &\quad+ \Esp\bigr((\bX_1^2+\bY_1^2)(\partial_zf)^2(\bH_1)\bigr)
  +4\Esp\Bigr((\partial_z f)^2(\bH_1)\int_0^1\!\Esp\bigr(\bX_t^2+\bY_t^2\mid
  \bH_1\bigr)\dd t\Bigr).
\end{align*}

The desired result is a direct consequence of the following lemma.

\begin{lemma}[Bridge control]\label{le:bridge}
  There exists a constant $C>0$ such that for all $0\leq t \leq 1$ and
  $h=(x,y,z)\in\dH$,
  \[
  \Esp(\bX_t^2+\bY_t^2 \mid\bH_1=h) \leq C ( t^2 d^2(e,h) +t)
  \]
  and 
  \[
  \int_0^1\!\Esp(\bX_t^2+\bY_t^2\mid \bH_1=h)\dd t \leq C(1+x^2+y^2+|z|).
  \]
\end{lemma}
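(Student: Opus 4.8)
The plan is to reduce the integral estimate to the pointwise-in-$t$ estimate, and then to prove the latter through the Markov (Bayes) representation of the Heisenberg bridge combined with the two-sided heat kernel bounds~\eqref{eq:pt} and the distance comparison~\eqref{eq:d}. The second inequality is the easy consequence: integrating the first one over $t\in[0,1]$ and using $\int_0^1(t^2d^2(e,h)+t)\dd t=\tfrac13d^2(e,h)+\tfrac12$ together with $d^2(e,h)\leq c_2(x^2+y^2+|z|)$ from~\eqref{eq:d} gives, after renaming constants, $\int_0^1\Esp(\bX_t^2+\bY_t^2\mid\bH_1=h)\dd t\leq C(1+x^2+y^2+|z|)$. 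So all the content is in the first, pointwise, bound.

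For the first inequality (here $\beta=0$), I would use that $(\bH_t)_{t\geq0}$ is Markov with transition density $p_t$ and that $r^2=x^2+y^2$ is a function of the current position. Writing $g=(x_g,y_g,z_g)$ for the position at time $t$ and using left-invariance $p_{1-t}(g,h)=p_{1-t}(e,g^{-1}h)$, the Markov property yields the Bayes representation
\[
\Esp(\bX_t^2+\bY_t^2\mid\bH_1=h)=\frac{1}{p_1(e,h)}\int_{\dH}(x_g^2+y_g^2)\,p_t(e,g)\,p_{1-t}(e,g^{-1}h)\dd g.
\]
Next I would insert the upper bounds of~\eqref{eq:pt} for the two numerator factors and the lower bound of~\eqref{eq:pt} for $p_1(e,h)$. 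The three exponential factors combine into $\exp\bigl(-\tfrac14(\tfrac{\rho^2}{t}+\tfrac{\sigma^2}{1-t}-d^2(e,h))\bigr)$ with $\rho=d(e,g)$ and $\sigma=d(g,h)$; by Cauchy--Schwarz $\tfrac{\rho^2}{t}+\tfrac{\sigma^2}{1-t}\geq(\rho+\sigma)^2$ and by the triangle inequality $\rho+\sigma\geq d(e,h)$, so this exponent is nonpositive and, after cancelling $\exp(-d^2(e,h)/4)$ against the denominator, it forces $g$ to concentrate at the Gaussian scale along a geodesic joining $e$ to $h$. Since that geodesic passes through $e$ at time $0$, its horizontal radius at the relevant parameter is of order $t\,d(e,h)$, which is the source of the leading term $t^2d^2(e,h)$, while the transverse Gaussian fluctuations of $(\bX_t,\bY_t)$ contribute the additive $t$.

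The main obstacle is the control of the polynomial prefactors, not the exponential geometry. The denominator lower bound carries the anisotropic factor $(1+r\,d(e,h))^{-1/2}$, which must be recovered from the two numerator prefactors of the form $(t^4+t^3r_g\rho)^{-1/2}$ and the analogous one in $1-t$, after integrating $x_g^2+y_g^2$ against the concentrated Gaussian weight; the crude estimate $(t^4+\cdots)^{-1/2}\leq t^{-2}$ is too lossy precisely in the regime where $d(e,h)$ is large, and the $r\,d$ terms have to be retained. To organise this I would exploit the dilation scaling $p_t(e,g)=t^{-2}p_1(e,\Dil_{1/\sqrt t}g)$ and the homogeneity $d(e,\Dil_\lambda g)=\lambda\,d(e,g)$ to normalise the two time scales $t$ and $1-t$, reducing the remaining integral to a universal bounded quantity multiplying the target $t^2d^2(e,h)+t$. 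Splitting according to whether $t^2d^2(e,h)\leq t$ keeps the two regimes of the claimed bound transparent and isolates where the anisotropic prefactor genuinely matters.
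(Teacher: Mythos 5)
Your starting point is the same as the paper's: the Bayes/Markov representation of the bridge density as $p_t(e,g)p_{1-t}(g,h)/p_1(e,h)$, the bound $r_g^2\lesssim d^2(e,g)$, and the reduction of the integrated inequality to the pointwise one via \eqref{eq:d}. That part is fine. But the core of the argument --- how to actually bound the resulting integral over $g$ --- is where your proposal has a genuine gap. Replacing both numerator kernels by the upper bound of \eqref{eq:pt} and the denominator by the lower bound, and then observing via Cauchy--Schwarz and the triangle inequality that the combined exponent $\frac{1}{4}\bigl(\frac{d^2(e,g)}{t}+\frac{d^2(g,h)}{1-t}-d^2(e,h)\bigr)$ is nonnegative, only tells you the exponential factor is $\leq 1$. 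It gives no decay in $g$, so the remaining integral of $r_g^2$ times the two polynomial prefactors over all of $\dH$ is not even finite without further input. To extract the claimed bound $C(t^2d^2(e,h)+t)$ you would need a \emph{quantitative} concentration statement, i.e.\ a lower bound on that exponent in terms of the distance from $g$ to the set of $t$-midpoints of geodesics from $e$ to $h$. That is a strict-convexity property of the squared Carnot--Carath\'eodory distance which is delicate on $\dH$: $d(e,\cdot)^2$ is not smooth on the center, and for $h$ on the center the minimizing set is a whole circle of geodesics, so the heuristic ``$g$ concentrates at Gaussian scale around the geodesic'' does not follow from anything you have written. The dilation-scaling step you invoke to ``reduce the remaining integral to a universal bounded quantity'' is precisely the assertion that needs proof, and it is not supplied.

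The paper avoids this entirely by a different mechanism, which is the idea missing from your proposal. Instead of bounding both kernels, it keeps the convolution structure intact: the polynomial factor $d^2(e,g)$ is absorbed into the exponential via $x\e^{-x}\leq\frac{1}{\e\veps}\e^{-(1-\veps)x}$, which converts $d^2(e,g)\,p_t(e,g)$ into $\frac{Ct}{\veps(1-\veps)^2}\,p_{t/(1-\veps)}(e,g)$ up to the prefactor comparison in \eqref{eq:pt}; then the exact Chapman--Kolmogorov identity $\int p_{t/(1-\veps)}(e,g)p_{1-t}(g,h)\dd g=p_{1+\frac{\veps t}{1-\veps}}(e,h)$ collapses the integral, and one is left with the ratio $p_{1+\frac{\veps t}{1-\veps}}(e,h)/p_1(e,h)\leq C\exp\bigl(\veps t\,d^2(e,h)/2\bigr)$, again by \eqref{eq:pt}. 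Optimizing $\veps$ (taking $\veps=1/(2td^2(e,h))$ when $td^2(e,h)\geq1$ and $\veps=1/2$ otherwise) produces exactly the two regimes $t^2d^2(e,h)$ and $t$. If you want to salvage your route, you should redirect it toward this time-change-plus-semigroup argument rather than a direct Laplace-type analysis of the triple-kernel integral.
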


Note that for the classical euclidean Brownian motion 
\[
\Esp(\Vert\bB_t\Vert^2\mid\bB_1)=t^2\Vert\bB_1\Vert^2+nt(1-t).
\]

\begin{proof}[Proof of Lemma~\ref{le:bridge}]
  % Instead of trying to obtain directly bounds on the conditional density and
  % use them after.It seems better to try to estimate directly
  For all random variables $U,V$, we denote by $\varphi_U$ the density of $U$
  and by $\varphi_{U\mid V=v}$ the conditional density of $U$ given $\{V=v\}$.
  For all $0<t\leq1$ and $k\in\dH$,
  \[
  \Esp(\bX_t^2+\bY_t^2\mid\bH_1=k)
  =\int_{\dH}\!r^2_g\,\varphi_{\bH_t\mid\bH_1=k}(g)\dd g
  \]
  where $r_g^2=x_g^2+y_g^2$ and $g=(x_g,y_g,z_g)$. Recall that
  $p_t(h,g):=\varphi_{\bH_t=g\mid\bH_0=h}$ and that $e:=(0,0,0)$ is the origin
  in $\dH$. Thanks to the Bayes formula, for all $g,k\in\dH$,
  \[
  \varphi_{\bH_t\mid\bH_1=k}(g)
  = \frac{\varphi_{(\bH_t,\bH_1)}(g,k)}{\varphi_{\bH_1}(k)}%
  = \frac{\varphi_{\bH_t}(g)\varphi_{\bH_1\mid\bH_t=g}(k)}{\varphi_{\bH_1}(k)}%
  = \frac{p_t (e,g)p_{1-t} (g,k)} {p_1(e,k)}.
  \] 
  Back to our objective, we have
  \begin{align*}
    \Esp(\bX_t^2+\bY_t^2\mid\bH_1=k)
    & =\frac{\displaystyle\int\!p_t(e,g)\,r^2_g\,p_{1-t}(g,k)\dd g}{p_1(e,k)}\\
    &\leq\frac{\displaystyle\int\!p_t(e,g)d^2(e,g) p_{1-t}(g,k)\dd g}
    {p_1(e,k)}.
    % {\displaystyle\int\!p_t(e,g)p_{1-t}(g,k)\dd g}.
  \end{align*}  
  % Using left-invariance and symmetry, we get 
  % \[
  % p_{1-t} (g,k)=p_{1-t} (e,g^{-1}k)= p_{1-t} (e,k^{-1}g)
  % \]
  % and thus
  % \[
  % \Esp(\bX_t^2+\bY_t^2\mid\bH_1=k)
  % \leq
  % \frac{\displaystyle\int\!p_t(e,g)d^2(e,g)p_{1-t}(,g^{-1}k)\dd g}
  % {\displaystyle\int\!p_t(e,g)p_{1-t}(e,g^{-1}k)\dd g}.
  % \]
  In what follows, the constant $C$ may change from line to line. The idea is
  to kill the polynomial term $d^2$ in the numerator by using the exponential
  decay of the heat kernel, at the price of a slight time change. Namely,
  using~\eqref{eq:pt}, we get, for all $0<\veps<1$,
  \begin{align*}
    \int\!p_t(e,g)r^2(e,g)p_{1-t}(g,k)\dd g 
    &\leq\int\!\frac{C}{\sqrt{t^4+t^3rd(e,g)}}d^2(e,g)
    \exp\left(-\frac{d^2(e,g)}{4t}\right)p_{1-t}(g,k)\dd g\\
    &\leq\int\!\frac{Ct}{\veps\sqrt{t^4+t^3rd(e,g)}}
    \exp\left(-(1-\veps)\frac{d^2(e,g)}{4t}\right)p_{1-t}(g,k)\dd g\\
    &\leq\int\!\frac{Ct}{\veps}
    \sqrt\frac{{\left(\frac{t}{1-\veps}\right)^4
        +\left(\frac{t}{1-\veps}\right)^3rd(e,g)}}{{t^4+t^3rd(e,g)}}p_{\frac{t}{1-\veps}}(e,g)p_{1-t}(g,k)\dd g\\
    &\leq\frac{Ct}{\veps(1-\veps)^2}\int\!p_{\frac{t}{1-\veps}}(e,g)p_{1-t}(g,k)\dd g\\
    &\leq\frac{Ct}{\veps(1-\veps)^2}p_{1+\frac{\veps t}{1-\veps}}(e,k),
  \end{align*}
  where we used $x \exp(-x) \leq \frac{1}{e\veps} \exp (-(1-\veps )x )$.
  Therefore, for all $0<\veps\leq 1/2$,
  \begin{align*}
    \Esp(\bX_t^2+\bY_t^2\mid\bH_1 = k)
    &\leq\frac{Ct}{\veps (1-\veps)^2}  \frac{p_{1 + \frac{\veps t}{1-\veps}} (e,k)} {p_{1} (e,k)}\\
    &\leq\frac{Ct}{\veps (1-\veps)^2}  \exp \left( \frac{\veps t} {1 -\veps+\veps t} \frac{d^2(e,k)}{4} \right) \\
    &\leq\frac{C t}{\veps}  \exp \left(\veps t \frac{d^2(e,k)}{2} \right).
  \end{align*} 
  Now if $t d^2(e,k) \geq 1$, then we take  $\veps =1/(2td^2(e,k))$ which gives 
  \[
  \Esp(\bX_t^2+\bY_t^2\mid\bH_1 = k)\leq C t^2 d^2(e,k),
  \]
  while if $t d^2(e,k) < 1$, then we take $\veps=1/2$ which gives
  \[
  \Esp(\bX_t^2+\bY_t^2\mid \bH_1 = k)\leq C t.
  \]
  This provides the first desired inequality. We get the second
  using~\eqref{eq:d}.
\end{proof}

\section{Proof of inequalities \eqref{eq:lsi-ell-nu} and \eqref{eq:lsi-ell-w}}
\label{se:another}

In this section, we provide for the reader convenience a short proof of the
inequalities \eqref{eq:lsi-ell-nu} and \eqref{eq:lsi-ell-w}. This corresponds
to the case $\beta=0$, but the method remains in fact valid beyond the
assumption $\beta=0$. Recall that this proof is essentially the Heisenberg
group specialization of the proof given in
\cite[Prop.~5.3.7~p.~129]{bonnefont-these} (see also
\cite[Prop.~4.11]{MR3601645}).

\begin{proof}[Proof of \eqref{eq:lsi-ell-nu}]
  For simplicity, we change $L$ by a factor 2 and set $L=X^2+Y^2$ and
  $P_t=\e^{tL}$, and in particular $\gamma=P_{1/2}(\cdot)(0)$. For all
  $f,g\in\mathrm{Schwartz}(\dH,\dR)$, let us define
  \begin{align*}
    \Gamma^{\mathrm{hori}}(f,g)&:=\frac{1}{2}(L (fg) -fLg -g Lf) = X(f) X(g) +Y(f) Y(g),\\
    \Gamma^{\mathrm{vert}}(f,g)&:=Z(f)Z(g),\\
    \Gamma^{\mathrm{elli}}(f,g)&:=  \Gamma^{\mathrm{hori}}(f,g) + \nu \Gamma^{\mathrm{vert}}(f,g).
  \end{align*}
  Let us also denote
  \begin{align*}
    \Gamma_2^{\mathrm{hori}}(f,f)
    &:=\frac{1}{2}(L \Gamma^{\mathrm{hori}}(f,f) -2 \Gamma^{\mathrm{hori}}(f, Lf)),\\ 
    \Gamma_2^{\mathrm{vert}}(f,f)
    &:=  \frac{1}{2}(L \Gamma^{\mathrm{vert}}(f,f) -2 \Gamma^{\mathrm{vert}}(f, Lf)),\\
    \Gamma_2^{\mathrm{mix}}(f,f)
    &:=  \frac{1}{2}(L \Gamma^{\mathrm{elli}}(f,f) -2 \Gamma^{\mathrm{elli}}(f, Lf)).
  \end{align*}
 In the sequel, we also denote $\Gamma(f)= \Gamma(f,f)$ and  $\Gamma_2(f)=\Gamma_2(f,f)$.
 
 \emph{Curvature inequality.} The following inequality holds: for all
 $f\in\mathrm{Schwartz}(\dH,\dR)$,
  \begin{equation}\label{eq:CD}
    \Gamma_2^{\mathrm{mix}}(f,f) \geq - \frac{1}{\nu}  \Gamma^{\mathrm{elli}} (f,f).
  \end{equation}
  Indeed, an easy computation gives  \begin{align*}
    \Gamma_2^{\mathrm{hori}}(f,f) 
    &= (X^2f)^2 + (Y^2f)^2 + (XY f)^2 + (YX f)^2 - 2 (Xf) (YZf) + 2 (Yf) (XZf)\\
  \Gamma_2^{\mathrm{vert}}(f,f) 
    &= (XZf)^2 + (YZf)^2.
  \end{align*}
  Since $ \Gamma_2^{\mathrm{mix}}=\Gamma_2^{\mathrm{hori}}+ \nu
  \Gamma_2^{\mathrm{vert}}$ and $Zf=XYf-YXf$, the Cauchy\,--\,Schwarz's inequality
  gives
  \[
  \Gamma_2^{\mathrm{mix}}(f,f) %
  \geq \frac{1}{2} (Lf)^2 %
  + \frac{1}{2} (XYf+YXf)^2 %
  +\frac{1}{2}  (Zf)^2  %
  -\frac{1}{\nu}X(f)^2 %
  -\frac{1}{\nu}Y(f)^2, 
  \]
 which implies the desired curvature inequality~\eqref{eq:CD}.
 
 \emph{Semigroup inequality.} Let $f\in\mathrm{Schwartz}(\dH,\dR)$ with
 $f\geq0$. For all $0\leq s\leq t$, set
 \[
   U(s):= (P_{t-s} f ) \, \Gamma^{\mathrm{hori}} \log(P_{t-s} f )
 \quad\text{and}\quad 
 V(s):= (P_{t-s} f ) \, \Gamma^{\mathrm{elli}} (\log(P_{t-s} f )).
 \]
 Then for $0\leq s\leq t$,  
 \begin{align}
   \label{eq:comparison_U}
   L U + \partial_s U &= (P_{t-s} f ) \,
   \Gamma^{\mathrm{hori}}(\log(P_{t-s}f)) %
   \leq (P_{t-s}f)\,\Gamma^{\mathrm{elli}}(\log(P_{t-s} f ))=V(s)\\
   \label{eq:comparison_V}
   L V + \partial_s V &=2 (P_{t-s} f ) \, \Gamma_2^{\mathrm{mix}}(
   \log(P_{t-s} f )) %
   \geq -\frac{2}{\nu} V(s).
 \end{align}
 The first equality in~\eqref{eq:comparison_U} holds since
 $\Gamma^{\mathrm{hori}}$ is the ``carré du champ'' associated to $L$, the
 inequality holds because $\Gamma^{\mathrm{vert}}(f, f) \geq 0$ and the second
 equality is the definition of $V$.
 \\
 In~\eqref{eq:comparison_V} we have used that $\Gamma^{\mathrm{elli}} =
 \Gamma^{\mathrm{hori}} + \nu \Gamma^{\mathrm{vert}}$, that the horizontal
 part of $\Gamma^{\mathrm{elli}}$ will produce $\Gamma_{2}^{\mathrm{hori}}$
 (same kind of computation as in the first inequality
 of~\eqref{eq:comparison_U}) and that vertical part commute to the horizontal
 one $\Gamma^{\mathrm{hori}} (f,\Gamma^{\mathrm{vert}} (f,f))=
 \Gamma^{\mathrm{vert}} (f, \Gamma^{\mathrm{hori}} (f,f))$, the inequality
 comes from~\eqref{eq:CD}.

 \emph{Final step.} Since by~\eqref{eq:comparison_V}, $L(\e^\frac{2s}{\nu} V(
 s)) + \partial_s(\e^\frac{2s}{\nu} V(s)) \geq 0$, a parabolic comparison such
 as~\cite[Prop.~4.5]{baudoin-garofalo} or a simple semigroup interpolation
 implies that for $t\geq 0$,
 \[
 \e^\frac{2t}{\nu} P_t( f \Gamma^{\mathrm{elli}} (\log f))   
 =\e^\frac{2t}{\nu} V(t) \geq V(0)= (P_t f) \Gamma^{\mathrm{elli}}( \log P_t f).
 \]
 In particular, 
 \begin{equation}
   \label{eq:elliptic_sub_commutation}
   V(s) \leq \e^\frac{2(t-s)}{\nu}  P_{t-s}( f \Gamma^{\mathrm{elli}} (\log f)).
 \end{equation}
 Now from~\eqref{eq:comparison_U} another application of the parabolic
 comparison theorem and the last estimate~\eqref{eq:elliptic_sub_commutation}
 give
 \[
 P_t (U(t)) 
 \leq U(0) 
 + \int_0^t P_s (V(s))\dd s \leq U(0) 
 +   \int_0^t \e^\frac{2(t-s)}{\nu}\dd s \; P_t \left( f \Gamma^{\mathrm{elli}} (\log f)\right);
 \]
 that is:
 \[
 P_t (f\log f)(x) - P_t (f)(x) \log P_t (f)(x) \leq \frac{\nu}{2} %
 \left(\e^{\frac{2t}{\nu}}-1\right)\, P_t \left( \frac{\Gamma^{\mathrm{elli}}
     ( f,f)} {f}\right)(x).
 \]
 The conclusion follows by taking $t=1/2$ and $x=0$ since $\gamma=P_{1/2}(\cdot)(0)$.
\end{proof}

\begin{proof}[Proof of \eqref{eq:lsi-ell-w}]
  Let us consider the right (instead of left) invariant vector fields
  \[
  \hat X:=\partial_x +\frac{y}{2} \partial_z
  \quad\text{and}\quad 
  \hat Y:=\partial_y -\frac{x}{2} \partial_z
  \]
  and $\hat L=\hat X+\hat Y$ and $\hat P_t=\e^{t\hat L}$ the corresponding
  generator and semi-group. The semi-group is bi-invariant in the sense that
  $P_t f(0)=\hat P_t f(0)$, see for instance~\cite{MR2462581}. Recall that
  $\gamma=P_{1/2}(\cdot)(0)=\hat P_{1/2}(\cdot)(0)$. The method of proof of
  \eqref{eq:lsi-ell-nu} remains valid if one replaces $X,Y,L,P_t$ by their
  right invariant counter parts and yields that for all
  $f\in\mathrm{Schwartz}(\dH,\dR)$,
  \begin{equation}\label{eq:lsi-ell2}
    \Ent_{\gamma}(f^2)  
    \leq  2 \nu (e^\frac{1}{\nu}-1) \Esp_\gamma\Bigr((\hat X f)^2 + (\hat Yf)^2 + \nu (- Zf)^2\Bigr).
  \end{equation}
  The conclusion follows by the summation of the
  inequalities~\eqref{eq:lsi-ell-nu} and~\eqref{eq:lsi-ell2}.
\end{proof}

\section{Extension to homogeneous Carnot groups of rank two}
\label{se:carnot}

In this final section we consider the class of homogeneous Carnot groups of
step two. We refer to \cite{MR2363343} for more details and results on this
class of Carnot groups. An homogeneous Carnot groups of step two is
$\dR^N=\dR^d \times \dR^m$ equipped with the group law given by
\[
(x,z) \cdot (x',z')=(x+x', z+z' + \frac{1}{2} \langle Bx,x' \rangle)
\]
where $x,x' \in \dR^d$, $z,z'\in \dR^m$ and 
\[
\langle Bx,x' \rangle = \left(\langle B^{(1)} x,x' \rangle, \cdots, \langle B^{(m)} x,x' \rangle \right) 
\]
for some linearly independent skew-symmetric $d\times d$ matrices $B^{(l)}$,
$1\leq l \leq m$. This class of groups includes a lot of usual examples, for
instance all the Heisenberg groups $\mathbb H_n$ and free rank two Carnot
groups. The case of the Heisenberg group $\mathbb H_1$ corresponds to
\[
  d=2,\ m=1,\ B=\begin{pmatrix} 0&-1\\1&0 \end{pmatrix}.
\]
Actually, it is known that each stratified group of rank two is isomorphic to
such an homogeneous Carnot group, see for instance \cite[Theorem
3.2.2]{MR2363343}. These homogeneous Carnot groups admit a dilation given by
\[
  \Dil_\lambda(x,z):= (\lambda x, \lambda^2 z).
\]
The natural sub-Riemannian Brownian motion is given by
$(\bX_t,\bZ_t)_{t\geq 0}$ where $\bX$ is a standard Brownian motion on $\dR^d$
and where $\bZ$ corresponds to its generalized Levy area:
\[
  \bZ_t^{(l)}= \sum_{1\leq p <q \leq d} b_{p,q}^{(l)} \bA^{(p,q)}_t
\]
with
\[
  \bA^{(p,q)}_t= \int_0^t \bX_s^{(p)} d\bX_s^{(q)} -  \int_0^t \bX_s^{(q)} d\bX_s^{(p)} .
\]
We denote by $\gamma$ the law of $(\bX_1,\bZ_1)$. The proof given in the case
of the Heisenberg group $\mathbb H_1$ easily extends to this setting and leads
to the following result.

\begin{theorem}[Logarithmic Sobolev inequality]\label{th:carnot}
  For all $f\in\mathrm{Schwartz}(\dR^N,\dR)$,
  \[
    \Ent_{\gamma}(f^2) \leq  2 \sum_{ p= 1}^d  \int_0^1   \dE\left[ \left(  \partial_p f (\bX_1,\bZ_1) + \sum_{l=1}^m 
        \left( \sum_{q=1}^ d b_{p,q}^{(l)}  (\bX_1^{(q)} -2 \bX_s^{(q)}) \right)      \partial_{d+l} f (\bX_1,\bZ_1) \right) ^2\right].
  \] 
\end{theorem}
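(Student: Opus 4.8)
The plan is to run the random-walk/Central-Limit-Theorem argument of Section~\ref{se:proof:th:} mutatis mutandis, the only new bookkeeping being the family of skew-symmetric matrices $B^{(l)}$ and the several central coordinates. I take $x_1,\dots,x_n$ i.i.d.\ standard Gaussian vectors in $\dR^d$ and form the rescaled random walk $S_n=(X_n,Z_n):=\Dil_{1/\sqrt n}\bigl((x_1,0)\cdots(x_n,0)\bigr)$, the product being taken in the group. Then $X_n=\frac1{\sqrt n}\sum_i x_i$, while the accumulation of the central increments of the group law gives, for the $l$-th central coordinate, a discrete generalized Lévy area
\[
Z_n^{(l)}=\frac{1}{2n}\sum_{i\ne j}\epsilon_{i,j}\,\langle B^{(l)}x_i,x_j\rangle ,
\]
the normalization being the one for which, by the Stroock--Varadhan invariance principle~\cite{MR0517406}, $\nu_n:=\mathrm{Law}(S_n)$ converges weakly to $\gamma=\mathrm{Law}(\bX_1,\bZ_1)$; indeed this double sum is the discrete counterpart of the stochastic integrals defining $\bA_1^{(p,q)}$.

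Next I would apply the optimal logarithmic Sobolev inequality for $\cN(0,I_{dn})$ to $g:=f\circ s_n$, where $S_n=s_n(x_1,\dots,x_n)$, and compute $\partial_{x_i^{(p)}}g$ by the chain rule. The horizontal part contributes $\partial_{x_i^{(p)}}X_n^{(q)}=\frac1{\sqrt n}\delta_{p,q}$; differentiating the quadratic form $Z_n^{(l)}$ and using the skew-symmetry $B^{(l)}=-{B^{(l)}}^{\!\top}$ collapses the two occurrences of $x_i$ into a single sum, yielding
\[
\partial_{x_i^{(p)}}Z_n^{(l)}=-\frac{1}{\sqrt n}\sum_{q=1}^d b^{(l)}_{p,q}\,X_{n,i}^{(q)},
\qquad
X_{n,i}^{(q)}:=-\frac1{\sqrt n}\sum_{j}\epsilon_{i,j}\,x_j^{(q)} .
\]
Hence $\partial_{x_i^{(p)}}g=\frac1{\sqrt n}\bigl(\partial_p f-\sum_{l}(\sum_q b^{(l)}_{p,q}X_{n,i}^{(q)})\,\partial_{d+l}f\bigr)(S_n)$, and summing the squares over $p$ and $i$ converts the Gaussian inequality into the pre-limit bound $\Ent_{\nu_n}(f^2)\le \frac2n\sum_{i=1}^n\dE[h(S_n,X_{n,i})]$, with $h$ the quadratic form appearing in the statement.

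The last step is the passage to the limit. Realizing all increments through a single Brownian motion $\bX$ on $\dR^d$ via $\xi_{n,i}:=\sqrt n(\bX_{i/n}-\bX_{(i-1)/n})$, one gets the exact identity $X_{n,i}=\bX_{i/n}+\bX_{(i-1)/n}-\bX_1$, so that $-X_{n,i}\to \bX_1-2\bX_s$ as $i/n\to s$; this is precisely the vector multiplying $\partial_{d+l}f$ in the statement, while the Riemann sums in $Z_n^{(l)}$ converge to $\bZ_1^{(l)}$. Writing $\frac1n\sum_{i=1}^n=\int_0^1\!\dd t$ with $i=\lceil tn\rceil$ and letting $n\to\infty$ then produces the claimed inequality. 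The main obstacle is exactly this limit: the entropy on the left passes to the limit because $f$ is bounded, but $h$ grows quadratically in the $X_{n,i}$ variables, so one cannot merely invoke weak convergence against bounded continuous test functions; the uniform Gaussian moment bounds on $(S_n,X_{n,i})$ supply the uniform integrability needed to justify the convergence, and the simultaneous convergence of the $m$ area components $Z_n^{(l)}$ to $\bZ_1^{(l)}$ --- the only genuinely new analytic ingredient compared with $\dH_1$ --- is again guaranteed by the invariance principle.
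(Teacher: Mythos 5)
Your proposal is correct and follows essentially the same route as the paper: the paper's proof of Theorem~\ref{th:carnot} consists of the single remark that the Heisenberg argument of Section~\ref{se:proof:th:} (Gaussian tensorization, chain rule through the group product, invariance principle) extends verbatim, and that is precisely what you carry out, with the skew-symmetry of the $B^{(l)}$ playing the role that $[X,Y]=Z$ played in collapsing the derivative of the discrete area into the single sum $X_{n,i}^{(q)}$. Your closing remark on uniform integrability actually addresses a point the paper glosses over even in the Heisenberg case (its limit passage is stated for bounded continuous $h$ while the relevant $h$ grows quadratically in $X_{n,i}$), and the only detail worth double-checking is the factor-of-$2$ normalization of the discrete area $Z_n^{(l)}$ against the central increment $\tfrac{1}{2}\langle Bx,x'\rangle$ of the group law --- your convention is the one consistent with the paper's definition of $\bZ_t^{(l)}$ via the unhalved $\bA^{(p,q)}_t$, so the inequality comes out exactly as stated.
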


\section*{Acknowledgements}

D.C.\ would like to thank Leonard Gross for his encouragement to explore this
problem and Fabrice Baudoin for his hospitality during a visit to Purdue
University in Fall 2008.

% 
% Note: Dominique Bakry and Laurent Saloff-Coste believed that it will not work.
%
% Date: Tue, 25 Nov 2008 22:36:57 +0100
% From: Djalil Chafai <chafai@math.univ-toulouse.fr>
% To: Leonard Gross <gross@math.cornell.edu>
% Subject: Subelliptic question
% ...
% You are right. I think what you have in mind doing is new.
% ...
% From: Leonard Gross <gross@math.cornell.edu>
% To: Djalil Chafai <djalil@chafai.net>
% Date: Thu, 1 Jan 2009 11:44:24 -0500 (EST)
% ...
% I think that a random walk approach to these questions is very natural.
% I hope that something along this line works out.
% ...

%\bibliography{heis}

\begin{thebibliography}{BGG}
\bibitem[B1]{MR2154760} Fabrice Baudoin, \emph{An introduction to the geometry
    of stochastic flows}, Imperial College Press, London, 2004. \MR{2154760}
\bibitem[B2]{MR3601645} \bysame, \emph{Stochastic analysis on sub-{R}iemannian
    manifolds with transverse symmetries}, preprint Ann. Probab. 45 (2017),
  no. 1, 56–81. \MR{3601645}
\bibitem[BB]{MR2885961} \bysame, \emph{Log-{S}obolev inequalities for
    subelliptic operators satisfying a generalized curvature dimension
    inequality}, J. Funct. Anal. \textbf{262} (2012), no.~6, 2646--2676.
  \MR{2885961}
\bibitem[3BC]{MR2462581} Dominique Bakry, Fabrice Baudoin, Michel Bonnefont,
  and Djalil Chafa{\"{\i}}, \emph{On gradient bounds for the heat kernel on
    the {H}eisenberg group}, J. Funct. Anal. \textbf{255} (2008), no.~8,
  1905--1938. \MR{2462581 (2010m:35534)}
\bibitem[BG]{baudoin-garofalo} Fabrice Baudoin and Nicola Garofalo,
  \emph{Curvature-dimension inequalities and Ricci lower bounds for
    sub-Riemannian manifolds with transverse symmetries.} J. Eur. Math. Soc.
  (JEMS) 19 (2017), no. 1, 151--219. \MR{3584561}
\bibitem[BGG]{MR1776501} Richard Beals, Bernard Gaveau, and Peter~C. Greiner,
  \emph{Hamilton-{J}acobi theory and the heat kernel on {H}eisenberg groups},
  J. Math. Pures Appl. (9) \textbf{79} (2000), no.~7, 633--689. \MR{1776501}
\bibitem[BGL]{MR3155209} Dominique Bakry, Ivan Gentil, and Michel Ledoux,
  \emph{Analysis and geometry of {M}arkov diffusion operators}, Grundlehren
  der Mathematischen Wissenschaften [Fundamental Principles of Mathematical
  Sciences], vol. 348, Springer, Cham, 2014. \MR{3155209}
\bibitem[BLU]{MR2363343} Andrea Bonfiglioli, Ermanno Lanconelli, Francesco
  Uguzzoni, \emph{Stratified {L}ie groups and potential theory for their
    sub-{L}aplacians}, Springer Monographs in Mathematics, Springer, Berlin,
  (2007), {xxvi+800}. \MR{2363343}
\bibitem[Bo]{bonnefont-these} Michel Bonnefont, \emph{Inégalités
    fonctionnelles pour des noyaux de la chaleur sous-elliptiques}, doctoral
  thesis, Université Paul Sabatier Toulouse III, 2009.
\bibitem[DM]{MR2124868} Bruce~K. Driver and Tai Melcher, \emph{Hypoelliptic
    heat kernel inequalities on the {H}eisenberg group}, J. Funct. Anal.
  \textbf{221} (2005), no.~2, 340--365. \MR{2124868 (2005k:58044)}
\bibitem[El]{MR2557945} Nathaniel Eldredge, \emph{Gradient estimates for the
    subelliptic heat kernel on {$H$}-type groups}, J. Funct. Anal.
  \textbf{258}, (2010), no.~12,504--533. \MR{2557945}
\bibitem[G1]{MR0420249} Leonard Gross, \emph{Logarithmic {S}obolev
    inequalities}, Amer. J. Math. \textbf{97} (1975), no.~4, 1061--1083.
  \MR{0420249 (54 \#8263)}
\bibitem[G2]{MR1161977} \bysame, \emph{Logarithmic {S}obolev inequalities on
    {L}ie groups}, Illinois J. Math. \textbf{36} (1992), no.~3, 447--490.
  \MR{1161977 (93i:22012)}
\bibitem[HZ] {MR2558178} Waldemar Hebisch, Boguslaw Zegarli\'nski, \emph
  {Coercive inequalities on metric measure spaces}, {J. Funct. Anal.},
  \textbf{258} (2010) no.~3, 814--851. \MR{2558178},
\bibitem[HM] {MR973806} Hermann Hueber, Detlef Horst M\"uller,
  \emph{Asymptotics for some {G}reen kernels on the {H}eisenberg group and the
    {M}artin boundary}, {Math. Ann.}, \textbf{283} (1989) no. 1, 97--119.
  \MR{973806}
\bibitem[JSC]{MR922334} David Jerison, Antonio S\'anchez-Calle,
  \emph{Subelliptic second order differential operators}, Lecture. Notes in
  Math., 1277 (1987), pp. 46-77. \MR{922334}
\bibitem[L1]{MR2240167} Hong-Quan Li, \emph{Estimation optimale du gradient du
    semi-groupe de la chaleur sur le groupe de {H}eisenberg}, J. Funct. Anal.
  \textbf{236} (2006), no.~2, 369--394. \MR{2240167}
\bibitem[L2]{MR2719560} \bysame, \emph{Estimations optimales du noyau de la
    chaleur sur les groupes de type {H}eisenberg}, J. Reine Angew. Math.
  \textbf{646} (2010), 195--233. \MR{2719560}
\bibitem[M]{MR1867362} Richard Montgomery, \emph{A tour of subriemannian
    geometries, their geodesics and applications}, Mathematical Surveys and
  Monographs, vol.~91, American Mathematical Society, Providence, RI, 2002.
  \MR{1867362}
\bibitem[N]{MR1439509} Daniel Neuenschwander, \emph{Probabilities on the
    {H}eisenberg group}, Lecture Notes in Mathematics, vol. 1630,
  Springer-Verlag, Berlin, 1996, Limit theorems and Brownian motion.
  \MR{1439509}
\bibitem[P]{MR1649603} Gyula Pap, \emph{Central limit theorems on
    stratified Lie groups.} Probability theory and mathematical statistics
  (Vilnius, 1993), 613–627, TEV, Vilnius, 1994. \MR{1649603}
\bibitem[SV]{MR0517406} Daniel~W. Stroock and S.~R.~S. Varadhan, \emph{Limit
    theorems for random walks on {L}ie groups}, Sankhy\=a Ser. A \textbf{35}
  (1973), no.~3, 277--294. \MR{0517406}
\bibitem[W]{MR0153042} Donald Wehn, \emph{Probabilities on {L}ie groups},
  Proc. Nat. Acad. Sci. U.S.A. \textbf{48} (1962), 791--795. \MR{0153042 (27
    \#3011)}
\end{thebibliography}
%\bibliographystyle{amsalpha}
%\nocite{*}

%% Bibliography
\makeatletter
\def\@MRExtract#1 #2!{#1}     % thanks, Martin!
\renewcommand{\MR}[1]{% we need to strip the "(...)"
  \xdef\@MRSTRIP{\@MRExtract#1 !}%
  \href{http://www.ams.org/mathscinet-getitem?mr=\@MRSTRIP}{MR-\@MRSTRIP}}
\makeatother
%%% End of author's macros

\providecommand{\bysame}{\leavevmode\hbox to3em{\hrulefill}\thinspace}
% \MRhref is called by the amsart/book/proc definition of \MR.

\providecommand{\href}[2]{#2}

\end{document}